\newcommand{\udot}{\mathpalette\udot@\relax}
\newcommand{\udot@}[2]{%
    \begingroup
    \sbox\z@{$#1{:}$}%
    \sbox\tw@{$#1{.}$}%
    \raisebox{\dimexpr\ht\z@-\ht\tw@}{$\m@th#1.$}%
    \endgroup
}
\def\mmgroup{\texttt{mmgroup}}
\def\etal{et~al.\ }
\newtheorem{lemma}{Lemma}[section]
\newtheorem{remark}[lemma]{Remark}
\newtheorem{prop}[lemma]{Proposition}
\newcommand{\CO}{\textup{Co}_1}
\newcommand{\QStr}{2^{1+24}}
\newcommand{\psl}[2]{\textup{PSL}_{#1} ( #2 )}
\newcommand{\sym}[1]{\textup{S}_{#1}}
\newcommand{\grp}{2^{5+10+20} \udot ( \sym{3} \times \psl{5}{2} )}
\newcommand{\grpA}{\QStr \udot \CO}
\newcommand{\grpB}{2^{2+11+22} \udot ( {\rm M}_{24} \times \sym{3} )}
\newcommand{\grpC}{2^{3+6+12+18} \udot ( \psl{3}{2} \times 3.\sym{6} )}
\newcommand{\grpZ}{2^{10+16} \udot {\rm O}^+_{10} (2)}
\author{Anthony Pisani}
\address{School of Mathematics, Monash University, Clayton VIC 3800, Australia}
\email{anthony.pisani@monash.edu}
\date{\today}
\title{Computing the Character Table of a 2-local Maximal Subgroup of the Monster}
\begin{document}
    \begin{abstract}
        We employ the recently developed hybrid and \mmgroup{}
        computational models for groups to calculate the character table
        of $N({\rm 2B}^5) \cong \grp$, a maximal subgroup of
        the Monster sporadic simple group. This completes the list
        of the character tables of maximal subgroups of the Monster.
        Our approach illustrates how the aforementioned computational
        models can be used to calculate relatively straightforwardly
        in the Monster.
    \end{abstract}
    
    \maketitle
    
    \section{\label{sec:intro}Introduction}
    
    \noindent The maximal subgroups of the Monster group, the largest of
    the 26 sporadic simple groups, form the focus of decades of research
    by Holmes, Norton, Wilson, and others. Wilson \cite{survey} observed
    in 2017 that their classification was the topic of more than 15 papers,
    with a small number of cases still outstanding; these were
    finally resolved in a 2023 paper of Dietrich \etal\cite{dlp} using
    Seysen's innovative Python package \mmgroup{}
    \cite{serpent,fast_monster,mmgroup}. We refer to \cite{dlp}
    for a more comprehensive discussion of the classification and
    its history. Meanwhile, calculations of their character tables and
    class fusions to the Monster span a series of works
    \cite{atlas,dlp,four_fus,n3b,verif}, most recently Hulpke's \cite{arith}
    computation of the character table for the class of subgroups $\grpZ$
    using a new hybrid representation developed with Dietrich \cite{dhulp}.
    (Atlas \cite{atlas} notation, e.g. $\grpZ$, will be used without
    explanation throughout.)
    Such data have been carefully collected in the GAP Character Table
    Library \cite{gap_ctbllib}, which as of its most recent version (1.3.9)
    covers all classes of maximal subgroups of the Monster except those of
    shape $\grp$, with attribution information provided via the library's
    change log and \texttt{InfoText} attributes of character tables. \\
    
    The development of suitable computer representations has proved
    a critical bottleneck in all investigations.
    Although theoretical approaches revealed great deal about the Monster
    early on, including its maximal $p$-local subgroups\footnote{
        :~That is, $p$-local subgroups not contained in a larger
        $p$-local subgroup. This is only a necessary condition
        for maximality, though many such subgroups of the Monster are
        in fact maximal.
    } through work of Wilson \cite{odd_local} (building on Norton) and
    Meierfranenfeld and Shpectorov \cite{2local1,2local2}, the limits of
    existing techniques were reached by the turn of the millenium.
    On the other hand, the Monster's lack of ``small representations'' has
    rendered it --- and, similarly, some of its larger subgroups ---
    impervious to the standard methods of computational group theory.
    Considerable progress has been made over time: unpublished software
    of Linton \etal\cite{mono} and later Holmes and Wilson \cite{monp}
    has been used to great effect for calculating in the Monster, while
    a number of its maximal subgroups have been implemented and studied via
    specialised constructions (e.g. \cite{n3b}) or simply
    significant computing power. However, a straightforward, unified approach
    that allows groups to be easily constructed, shared, and analysed
    \textsl{as subgroups embedded in the Monster} has until recently remained
    beyond reach. \\
    
    As the first open-source tool capable of rapid computation
    in the Monster group, Seysen's \mmgroup{} \cite{mmgroup} represents
    a ground-breaking advance in this respect. An early application
    by Dietrich \etal\cite{dlp} completes the classification of
    maximal subgroups of the Monster using reproducible calculations
    and constructions, provided online alongside the paper. Later work
    \cite{dlpp} offers instances of almost all maximal subgroups
    in a similarly well-documented format. \\
    
    Dietrich and Hulpke's hybrid representation \cite{dhulp,arith,hybrep} is
    likewise an important development. Despite \mmgroup's achievements,
    it currently lacks much of the functionality present
    in more standard group-theoretic tools such as GAP \cite{gap}.
    A hybrid representation of a finite group augments a PC (polycyclic) presentation
    \cite[\S8]{hcgt} for the group's solvable radical with extra generators
    and relators, as well as a confluent rewriting system and
    permutation represention for the factor group these additional elements
    generate. This allows effective encoding of groups in which the radical
    is large (such as many $p$-local subgroups of the Monster). Thanks
    to the wealth of prior work on the more general Solvable Radical Paradigm
    \cite[\S10]{hcgt}, a variety of algorithms for computing
    with the hybrid representation are already available and implemented
    in GAP. See \cite{arith} and the references therein
    for further details, including an application to calculate
    the character table of $\grpZ$. \\
    
    The aim of this paper is to illustrate an intuitive new approach
    to working with maximal subgroups of the Monster by calculating
    the character table of the subgroup $\grp$. We directly construct
    an instance of this subgroup within a copy of the Monster --- provided
    in \cite{dlpp} --- and derive a presentation therein to pass to GAP,
    as described in \S\ref{sec:calc_g}. A hybrid group constructed
    from this presentation is then employed in all necessary calculations
    to obtain the character table. Our task can mostly be
    accomplished with standard GAP commands in the span of a few days,
    with code provided online at \cite{pisgit} for easy reproducibility;
    moreover, easy transfer between the \mmgroup{} and hybrid representations
    means results can be considered in the context of the Monster. \\
    
    The character table computed for $\grp$ is to be included
    in an upcoming release of the GAP Character Table Library. Its inclusion
    will simplify many arguments about the Monster, and in fact
    sporadic groups more generally, by allowing one to iterate
    over the character tables of all their maximal subgroups. \\
    
    \subsection*{\label{ack}Acknowledgments}
    
    The author is deeply grateful to Dr. Melissa Lee
    for many helpful suggestions in relation to the paper. The author
    would also like to thank Professor Thomas Breuer
    for advice on working with character tables in GAP and
    the GAP Character Table Library, Professors Heiko Dietrich and Alexander Hulpke
    for their support with the hybrid group representation in GAP,
    and Dr. Tomasz Popiel for originally suggesting the problem.
    This research was supported by an Australian Research Council Discovery
    Early Career Researcher Award (project number DE230100579).
    
    \section{\label{sec:calc_g}Initial Computations}
    
    Let $G$ denote --- here and in all that follows --- the maximal subgroup of
    the Monster with shape $\grp$ constructed using \mmgroup{} in \cite{dlpp}.
    As outlined in \S\ref{sec:intro}, the first task is to represent $G$
    as a hybrid group. Recall that such a group is given by extending
    a polycyclic (PC) presentation \cite[\S8]{hcgt} for the group's solvable radical ${\textbf O}_\infty (G)$
    with extra generators and relators, along with useful but redundant data
    describing the factor group $G/{\textbf O}_\infty (G)$ these additions generate.
    It will be useful to note that the further relators are divided
    into two sets \cite[\S1]{arith}. One describes the automorphisms of
    ${\textbf O}_\infty (G)$ that the extra generators induce by conjugation.
    The other consists of relators for $G/{\textbf O}_\infty (G)$: due to the fact
    that these will in general evaluate to a non-trivial element of
    ${\textbf O}_\infty (G)$ rather than the identity, however, they must be suffixed
    with elements $t \in {\textbf O}_\infty (G)$ (Hulpke's ``tails'' \cite{arith})
    to give relators for $G$. \\
    
    With this in mind, we seek solvable $N \, \lhd \, G$ in \mmgroup{} such that
    we can evaluate isomorphisms from $N$ to a polycyclic group, from $G/N$
    to a permutation group, and their inverses. This is sufficient to derive
    a PC presentation for $N ({\textbf O}_\infty (G)/N) = {\textbf O}_\infty (G)$ and
    a presentation for $G/{\textbf O}_\infty (G)$, which in turn allows
    us to compute the required conjugation automorphisms and
    relator corrections $t \in {\textbf O}_\infty (G)$. Further details are
    supplied in Remark~\ref{rmk:construction}. The following lemma
    simply verifies the resulting hybrid group is isomorphic to $G$.
    
    \begin{lemma}
        \label{lem:g}
        The hybrid group constructed by the file \texttt{2\_5.g}
        in the accompanying code \cite{pisgit} is isomorphic to $G$.
    \end{lemma}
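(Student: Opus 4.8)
The plan is to build an explicit isomorphism and certify it by direct arithmetic in $\MM$. Write $H$ for the hybrid group produced by \texttt{2\_5.g}, presented by its polycyclic generators for ${\textbf O}_\infty(G)$ together with the extra generators and the (automorphism and tail-corrected) relators of \S\ref{sec:calc_g}. By construction every generator symbol of $H$ was read off from a concrete \mmgroup{} element of $G$: the polycyclic generators come from a generating sequence of ${\textbf O}_\infty(G)$ via the inverse of the chosen isomorphism to a PC group, and each extra generator is a fixed \mmgroup{} preimage of a generator of $G/{\textbf O}_\infty(G) \cong \psl{5}{2}$. Sending each abstract generator to its recorded element therefore defines a map $\phi$ on the free group on these symbols, landing inside $G \le \MM$.

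The first and decisive step is to show $\phi$ factors through $H$. Since the defining relators of $H$ constitute a presentation, by von Dyck's theorem it suffices to verify that each relator evaluates to the identity under $\phi$. There are three families: the relators of the PC presentation of ${\textbf O}_\infty(G)$, the relators encoding the conjugation action of the extra generators on ${\textbf O}_\infty(G)$, and the relators lifting a presentation of $\psl{5}{2}$ back to $G$ after suffixing Hulpke's tails. Each relator is a word in the recorded generators, so checking that it collapses to $1$ is a finite --- if lengthy --- \mmgroup{} computation. I expect this to be the main obstacle: it is exactly here that any error in the automorphism data or in a computed tail from \S\ref{sec:calc_g} would manifest as a nontrivial image, and the correctness of the whole construction rests on these checks passing.

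Granting that $\phi$ descends to a homomorphism $H \to G$, surjectivity is immediate: the images of the polycyclic generators already generate ${\textbf O}_\infty(G)$, while the extra generators map onto a generating set of the quotient $G/{\textbf O}_\infty(G)$, so $\im\phi = G$. It then remains to match orders. The hybrid data pin down $|H|$ exactly --- the consistent PC presentation fixes $|{\textbf O}_\infty(G)| = 2^{35}\cdot 6$, and the faithful permutation representation of the quotient contributes $|\psl{5}{2}|$ --- and this product is precisely the known order of the maximal subgroup $G$ of shape $\grp$. A surjection between finite groups of equal order is an isomorphism, so $\phi$ is the desired isomorphism $H \cong G$. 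I would record the construction details underlying the generator choices and the tail computations in Remark~\ref{rmk:construction}.
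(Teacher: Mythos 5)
Your proposal takes essentially the same route as the paper: both extract a presentation for $H$ from its hybrid data (PC relators, conjugation automorphisms, and tail-corrected relators for the quotient), verify computationally that explicit \mmgroup{} elements of $G$ satisfy these relators and generate $G$, and conclude via von Dyck's theorem together with $|H| = |G|$ that the resulting epimorphism $H \to G$ is an isomorphism. The only cosmetic difference is that you justify $|H|$ structurally from the hybrid data, whereas the paper simply has GAP compute $|H|$ and compare it with the known order $2^{46}\cdot 3^3\cdot 5\cdot 7\cdot 31$.
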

    \begin{proof}
        Let $H$ denote the group constructed by \texttt{2\_5.g}, and
        $\mathcal{S}$ its solvable radical. The script \texttt{calc.g}
        first confirms that
        $|H| = 2^{46} \cdot 3^3 \cdot 5 \cdot 7 \cdot 31 = |G|$, before
        writing relators for $H/\mathcal{S}$, the automorphisms
        of $\mathcal{S}$ induced by generators of $H/\mathcal{S}$,
        the elements $t \in \mathcal{S}$ required to lift the relators
        for $H/\mathcal{S}$ to $H$, and PC relators for $\mathcal{S}$
        to a file \texttt{pres.json}. As noted above, the data
        in \texttt{pres.json} constitute a presentation for $H$.
        We then use \texttt{2\_5.py} to verify that certain
        \mmgroup{} elements $L_G \subset G$ both satisfy
        this presentation and generate $G$. It follows
        by Von Dyck's theorem \cite[Theorem~2.53]{hcgt} that $G$ is
        an epimorphic image of $H$, which since $|G| = |H|$ can
        only be $H$ itself.
    \end{proof}
    
    \begin{remark}
        \label{rmk:construction}
        It is worth noting how the hybrid group was constructed originally
        from the \mmgroup{} representation. A normal elementary abelian subgroup
        $N = \langle g^G \rangle$ of $G$ with order $2^{15}$ was
        obtained first, where suitable $g \in G$ was found by random search.
        Enumeration of $N$ as words in a set of independent generators
        then gave an isomorphism from $N$ to the vector space
        $\mathbb{F}_2^{15}$, allowing the conjugation action of $G$
        on $N$ to be represented as a group of $15 \times 15$ matrices
        over $\mathbb{F}_2$. It turns out that the resulting matrix group
        induces an orbit on $1488$ vectors in $\mathbb{F}_2^{15}$, and that
        the permutation group obtained from this action has order $|G|/2^{15}$.
        We thus have a faithful permutation representation of $G/N$. A base and
        strong generating set could then be constructed to allowing mapping of
        permutations to pre-images in $G$, which in combination
        with the isomorphism on $N$ provided the necessary infrastructure
        to extend a hybrid representation of $G/N$ --- easily computed in GAP
        --- to $G$.
    \end{remark}
    
    Once this has been done, conjugacy classes and other elementary data about $G$
    required for the header of its character table can be computed with GAP's in-built
    facilities, as illustrated in the accompanying code file \texttt{calc.g}
    \cite{pisgit}. No special comment is necessary, except that most power maps can
    be deduced from a small subset via the relation $(x^i)^j = x^{ij}$.
    
    \section{\label{sec:chartab}The Characters of $G$}
    
    Once the appropriate preliminary data --- the header of the character table ---
    has been computed, the task turns to determining the list of
    irreducible characters of $G$. We follow Unger \cite{unger},
    who describes the default algorithm for computing character tables used by Magma,
    in applying a variant of Brauer's Theorem on Induced Characters:
    
    \begin{prop}
        \label{prop:brauer}
        Let $H$ be a finite group with conjugacy class representatives
        $h_1, h_2, \ldots h_n$. For each $h_i$, fix representatives
        $P_{i, p}$ of those classes of non-cyclic Sylow subgroups
        of $C_H(h_i)$ with order coprime to $|h_i|$. Then
        the irreducible characters of $H$ are linear combinations
        with integer coefficients of those induced from subgroups
        in $\mathcal{S} = \{
            \langle h_i \rangle,
            P_{i, p} \times \langle h_i \rangle
        \}$.
    \end{prop}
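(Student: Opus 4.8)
The plan, following Unger \cite{unger}, is to deduce this refinement from the classical Brauer induction theorem, which asserts that every virtual character of $H$ --- in particular every irreducible character --- is a $\mathbb{Z}$-linear combination of characters induced from elementary subgroups, where an elementary subgroup has the form $E = \langle x \rangle \times Q$ with $Q$ a $p$-group, $\langle x \rangle$ cyclic, and $p \nmid |x|$. Granting this, it suffices to show that every character induced from such an $E$ is already a character induced from a single member of $\mathcal{S}$; the proposition then follows by substituting these expressions into the Brauer decomposition, with integrality of the coefficients inherited directly. Throughout I would use two standard facts: induction is transitive, so that $\mathrm{Ind}_E^H \psi = \mathrm{Ind}_K^H(\mathrm{Ind}_E^K \psi)$ whenever $E \le K \le H$, and induced characters are invariant under simultaneous $H$-conjugation of the source subgroup and the character, so that conjugate subgroups contribute the same induced characters up to relabelling.

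First I would normalise the cyclic part. Since $x$ is $H$-conjugate to a unique class representative $h_i$, conjugating $E$ by a suitable element lets me assume $x = h_i$, whence $\langle x \rangle = \langle h_i \rangle$ and, because $Q$ commutes with $x$, $Q \le C_H(h_i)$. The coprimality hypothesis $p \nmid |x|$ becomes $p \nmid |h_i|$, matching exactly the condition that singles out which Sylow subgroups $P_{i,p}$ are retained.

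Next I would split into two cases according to whether $Q$ is cyclic. If $Q$ is cyclic, then $E = \langle h_i \rangle \times Q$ is a product of two commuting cyclic groups of coprime orders and hence is itself cyclic; writing $E = \langle y \rangle$, the subgroup $E$ is conjugate to $\langle h_j \rangle$ for the representative $h_j$ in the class of $y$, so characters induced from $E$ coincide (up to relabelling) with characters induced from $\langle h_j \rangle \in \mathcal{S}$. If instead $Q$ is non-cyclic, I would embed $Q$ in a Sylow $p$-subgroup $P$ of $C_H(h_i)$; then $P$ is non-cyclic, since a cyclic group has only cyclic subgroups, and by Sylow's theorem $P$ is $C_H(h_i)$-conjugate to the retained representative $P_{i,p}$. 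As $P \le C_H(h_i)$ and $\gcd(|h_i|, |P|) = 1$, the product $K = \langle h_i \rangle \times P$ is a genuine internal direct product, it contains $E$, and it is $H$-conjugate to $\langle h_i \rangle \times P_{i,p} \in \mathcal{S}$. By transitivity of induction, $\mathrm{Ind}_E^H \psi = \mathrm{Ind}_K^H(\mathrm{Ind}_E^K \psi)$ is a character induced from $K$, hence --- up to the conjugacy relabelling --- a character induced from $\langle h_i \rangle \times P_{i,p}$.

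This exhausts all elementary subgroups, so every term in the Brauer decomposition of an irreducible character is a single induced character from a member of $\mathcal{S}$, completing the argument. I expect the only genuine difficulty to lie in Brauer's induction theorem itself, which I would simply cite; the remaining work is elementary bookkeeping, and the one point demanding care is the observation that the non-cyclicity of $Q$ forces the containing Sylow subgroup to be non-cyclic and hence to appear among the $P_{i,p}$ --- precisely what guarantees that the economical family $\mathcal{S}$, rather than some larger collection, already suffices.
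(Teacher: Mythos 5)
Your argument is correct, and its overall architecture matches the paper's: both start from Brauer's induction theorem and then absorb the cyclic Sylow factors into the cyclic subgroups $\langle h_j \rangle$ via the observation that a direct product of cyclic groups of coprime order is cyclic. The one step you handle genuinely differently is the reduction from ``arbitrary elementary subgroups'' to ``Sylow subgroups of centralisers''. The paper gets this for free by citing Serre \cite[\S10.4, Theorem~19]{serre} and noting that a careful reading of Serre's proof shows the induction already runs only over subgroups $Q_{i,p}\times\langle h_i\rangle$ with $Q_{i,p}$ a full Sylow subgroup of $C_H(h_i)$; you instead take the black-box statement with arbitrary elementary subgroups $E=\langle x\rangle\times Q$ and push each one up into a member of $\mathcal{S}$ using transitivity of induction, after conjugating $x$ onto a class representative and embedding $Q$ in a Sylow $p$-subgroup of the centraliser (correctly noting that non-cyclicity of $Q$ forces non-cyclicity of the ambient Sylow subgroup, so that it is among the retained $P_{i,p}$). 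Your route is more self-contained --- it does not ask the reader to re-examine the internals of Serre's proof --- at the cost of an extra appeal to transitivity of induction and conjugation-invariance of induced characters; the paper's route is shorter but rests on an unverifiable-from-the-statement claim about what Serre's proof actually uses. Both are sound, and your version arguably documents the reduction more completely.
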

    \begin{proof}
        Serre \cite[\S10.4, Theorem 19]{serre} proves that
        the irreducible characters of $H$ are such combinations
        of the characters induced from subgroups in $\mathcal{S}_2 = \{
            \langle h_i \rangle,
            Q_{i, p} \times \langle h_i \rangle
        \}$, where the subgroups $Q_{i, p}$ are representatives of
        all classes of Sylow subgroups of $C_H(h_i)$ with order coprime
        to $|h_i|$, cyclic or not. (The theorem as stated by Serre allow
        for a much larger class of subgroups, but a careful reading of
        the proof reveals only the aforementioned subgroups are used.)
        But $\mathcal{S} = \mathcal{S}_2$: if $Q_{i, p}$ is cyclic, then
        $Q_{i, p} \times h_i$ is a direct product of cyclic groups
        with coprime orders, and thus itself cyclic
        by the Chinese Remainder Theorem.
    \end{proof}
    
    \begin{remark}
        Since $|G| = 2^{46} \cdot 3^3 \cdot 5 \cdot 7 \cdot 31$,
        the sets $P_{i, p}$ for $G$ are 2- or 3-groups; moreover,
        the 3-groups are of order at most $27$. It turns out that there
        are a total of $19$ 2-groups and $27$ 3-groups to consider,
        the latter all elementary abelian.
    \end{remark}
    
    Now, constructing a set $\mathcal{S}$
    as in Proposition~\ref{prop:brauer} is computationally straightforward.
    A little more thought is required to determine, given a list of
    the characters of a finite group $H$ induced from subgroups
    in $\mathcal{S}$, which linear combinations with integral coefficients
    give rise to irreducible characters of $H$. The well-known result that
    irreducible characters (and their negatives) are
    the only such combinations that have norm 1 with respect
    to the inner product $\langle \chi, \psi \rangle =
        \left| H \right|^{-1} \sum_i
            \left| h_i^G \right| \chi (h_i) \bar{\psi} (h_i)$
    provides a useful test to this end. Indeed, GAP's \cite{gap}
    implementation of the LLL lattice reduction algorithm \cite[\S17]{galb},
    which seeks integral linear combinations of vectors minimising
    a given norm, can be used with this inner product to search
    for irreducible characters initially. This suffices to yield
    the full character table of $H = G$, though Hulpke \cite[\S4]{arith} warns
    that its success is not guaranteed in general. \\
    
    The only remaining --- and most time-consuming ---
    step in implementing Proposition~\ref{prop:brauer} is the computation
    of characters induced from subgroups in $\mathcal{S}$. Observe that
    $\langle h_i \rangle = 1 \times \langle h_i \rangle$, so
    all such subgroups can be written in the form
    $P \times \langle x \rangle$ for a $p$-group $P$ and
    element $x$ of order coprime to $|P|$. The following lemma
    gives a formula for the characters of $H$ induced
    from irreducible characters of
    $P \times \langle x \rangle \in \mathcal{S}$.
    
    \begin{lemma}
        \label{lem:ind}
        Let $H$ be any finite group, $x \in H$ be any element of $H$,
        and $P \le C_H (x)$ be a group such that $|P|$ is coprime
        to $n = |x|$. Write $e$ for the exponent of $P$.
        Then there is a function $M : \, \mathbb{Z} \to \mathbb{Z}$
        such that $M(i) \equiv i \textrm{ mod } n$ and
        $M(i) \equiv 1 \textrm{ mod } e$ for all $i \in \mathbb{Z}$.
        Moreover, the characters of $H$ induced from
        irreducible characters of $K = P \times \langle x \rangle$ are
        given by
        
        \[ \chi (g) = \frac{|C_H(g)|}{n} \sum_{i=0}^{n-1} u^i \sum_{
                \substack{y \in \mathcal{C} \\
                    (xy)^{M(i)} \in g^G}
            } \frac{\chi(y)}{| C_P (y) |}, \qquad g \in H, \]
        
        where $\mathcal{C}$ is a set of conjugacy class representatives
        for $P$, $\chi$ is an irreducible character of $P$, and
        $u \in \mathbb{C}$ is any $n^{\textrm{th}}$ root of unity.
    \end{lemma}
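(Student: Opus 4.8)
The plan is to compute the induced character $\mathrm{Ind}_K^H\theta$ directly for each irreducible $\theta$ of $K$ and then reorganise the resulting sum into the stated shape. First I would dispose of the elementary point: since $e=\exp(P)$ divides $|P|$ and $|P|$ is coprime to $n$, we have $\gcd(n,e)=1$, so for each $i$ the Chinese Remainder Theorem produces an integer $M(i)$ with $M(i)\equiv i\pmod n$ and $M(i)\equiv 1\pmod e$; this defines the required function $M$. Next I would record that, because $K=P\times\langle x\rangle$ is a direct product, its irreducible characters are exactly the products $\chi\otimes\lambda_u$, where $\chi\in\mathrm{Irr}(P)$ and $\lambda_u$ is the linear character of $\langle x\rangle$ sending $x\mapsto u$ for an $n^{\textrm{th}}$ root of unity $u$. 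Writing a general element of $K$ uniquely as $px^j$ with $p\in P$ and $0\le j<n$, such a character takes the value $\chi(p)u^j$ there, so it suffices to show the displayed formula computes $\mathrm{Ind}_K^H(\chi\otimes\lambda_u)$.

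The main computation starts from the class-function form of the induction formula,
\[
\mathrm{Ind}_K^H\theta(g)=\frac{|C_H(g)|}{|K|}\sum_{k\in K\cap g^H}\theta(k),
\]
where $g^H$ is the conjugacy class of $g$ in $H$ and the factor $|C_H(g)|$ counts, for each such $k$, the elements $a\in H$ with $a^{-1}ga=k$. Substituting $k=px^j$, $|K|=|P|\,n$, and $\theta(px^j)=\chi(p)u^j$ turns the right-hand side into a double sum over $0\le j<n$ and over those $p\in P$ with $px^j\in g^H$.

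The reorganisation into a sum over the class representatives $\mathcal{C}$ of $P$ rests on two observations, both routine. Since $\chi$ is a class function on $P$ it is constant on each $P$-class; and the membership condition $px^j\in g^H$ is likewise constant on the $P$-class of $p$, because $P$ centralises $x$, so conjugating $p$ by $q\in P$ conjugates $px^j$ by $q$ inside $H$. Collapsing the inner sum therefore replaces $\sum_{p}$ by $\sum_{y\in\mathcal{C}}$ with each surviving term weighted by $|y^P|=|P|/|C_P(y)|$; the factor $|P|$ cancels the $|P|$ in the denominator, leaving the expression
\[
\frac{|C_H(g)|}{n}\sum_{j=0}^{n-1}u^j\sum_{\substack{y\in\mathcal{C}\\ yx^j\in g^H}}\frac{\chi(y)}{|C_P(y)|}.
\]

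It remains only to replace $yx^j$ by $(xy)^{M(j)}$, and this is the single step where the choice of $M$ earns its keep, so I would flag it as the crux even though the identity itself is short. Because $y\in P\le C_H(x)$, the elements $x$ and $y$ commute, whence $(xy)^{M(j)}=x^{M(j)}y^{M(j)}$; then $x^{M(j)}=x^j$ since $M(j)\equiv j\pmod n$ and $x^n=1$, while $y^{M(j)}=y$ since $M(j)\equiv 1\pmod e$ and $y^e=1$. Thus $(xy)^{M(j)}=x^jy=yx^j$ as elements of $H$, so the two membership conditions coincide term by term and the sum is exactly the one in the statement (with $i=j$). I do not expect a genuine obstacle here; the only care needed is to confirm that the conjugacy-class condition really is a $P$-class invariant and to keep the bookkeeping of the constants $|K|$, $|P|$ and $|C_P(y)|$ straight. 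The practical payoff of the $(xy)^{M(i)}$ formulation, namely that each relevant element is a single power of $xy\in K$ so that its $H$-class can be read off from precomputed power maps, is what makes the formula useful for the computation, even though it plays no role in its correctness.
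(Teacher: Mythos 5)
Your proposal is correct and follows essentially the same route as the paper's own proof: both decompose the irreducible characters of the direct product $K = P \times \langle x \rangle$, apply the class-function form of the induction formula, collapse the sum over $P$ to conjugacy class representatives weighted by $|P|/|C_P(y)|$, and invoke the Chinese Remainder Theorem construction of $M$ together with the commutativity of $x$ and $y$ to rewrite $yx^j$ as $(xy)^{M(j)}$. The only difference is a harmless reordering (you collapse to class representatives before substituting $(xy)^{M(j)}$, whereas the paper does these in the opposite order), and your closing remark about reading $H$-classes off precomputed power maps correctly identifies the practical motivation for the $M(i)$ device.
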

    
    \begin{proof}
        Since $e$ divides $|P|$, which is coprime to $n$,
        the first claim follows from the Chinese Remainder Theorem. \\
        
        To prove the remainder of the lemma, note that
        the direct product $K = P \times \langle x \rangle$
        has irreducible characters given by $\chi_0 (x^i y) =
            \chi_1 (x^i) \chi_2 (y)$ for all $x^i \in \langle x \rangle$
        and $y \in P$, where $\chi_1$ and $\chi_2$ are
        irreducible characters of $\langle x \rangle$ and $P$, respectively.
        The irreducible characters of the cyclic group $\langle x \rangle$,
        furthermore, are of the form $\chi_1 (x^i) = u^i$ for $u$
        an $n^{\textrm{th}}$ root of unity. Hence, choosing $u$ and
        $\chi = \chi_2$ corresponding to any particular irreducible character
        $\chi_0$ of $K$,
        
        \begin{align*}
            {\rm ind}^H_K \chi_0 (g) & = \frac{1}{|K|} \sum_{
                \substack{h \in H \\ g^h \in K}
            } \chi_0 (g^h) \\
            & = \frac{1}{|P \times \langle x \rangle|} \sum_{
                k \in K \cap g^G
            } \chi_0 (k) |C_H(k)| \\
            & = \frac{|C_H(g)|}{|P| n} \sum_{i=0}^{n-1} \sum_{
                \substack{y_0 \in P \\ x^i y_0 \in g^G}
            } \chi_0 (x^i y_0) \\
            & = \frac{|C_H(g)|}{|P| n} \sum_{i=0}^{n-1} \sum_{
                y \in \mathcal{C}
            } \frac{1}{| C_P (y) |} \sum_{
                \substack{k \in P \\ \, x^i y^k \in g^G}
            } \chi_0 (x^i y^k).
        \end{align*}

        Now, elements $y, k \in P \le C_H(x)$ satisfy
        $x^i y^k = (x^k)^i y^k = (x^i y)^k = (x^{M(i)} y^{M(i)})^k = ((xy)^{M(i)})^k$
        by the definition of $M(i)$. Thus
        
        \begin{align*}
            {\rm ind}^G_K \chi_0 (g) & =
            \frac{|C_H(g)|}{|P| n} \sum_{i=0}^{n-1} \sum_{
                y \in \mathcal{C}
            } \frac{1}{| C_P (y) |} \sum_{
                \substack{k \in P \\ ((xy)^{M(i)})^k \in g^G}
            } u^i \chi(y^k) \\
            & = \frac{|C_H(g)|}{|P| n} \sum_{i=0}^{n-1} u^i \sum_{
                y \in \mathcal{C}
            } \frac{1}{| C_P (y) |} \sum_{
                \substack{k \in P \\ (xy)^{M(i)} \in g^G}
            } \chi(y) \\
            & = \frac{|C_H(g)|}{|P| n} \sum_{i=0}^{n-1} u^i \sum_{
                \substack{y \in \mathcal{C} \\
                    (xy)^{M(i)} \in g^G}
            } \frac{|P|}{| C_P (y) |} \chi(y) \\
            & = \frac{|C_H(g)|}{n} \sum_{i=0}^{n-1} u^i \sum_{
                \substack{y \in \mathcal{C} \\
                    (xy)^{M(i)} \in g^G}
            } \frac{\chi(y)}{| C_P (y) |}.
        \end{align*}
    \end{proof}
    
    The conjugacy classes of $G$ have already been computed,
    along with the corresponding orders, centraliser sizes, and power maps.
    It is therefore possible to apply Lemma~\ref{lem:ind}
    to $P \times \langle h_i \rangle < G$ once we know
    conjugacy class representatives for $P$, the centraliser order $|C_P (y)|$
    and $G$-class of $h_i y$ for each representative $y$, and
    the irreducible characters of $P$. As illustrated in \texttt{calc.g},
    standard GAP commands can in \textsl{most} cases compute the relevant data
    quickly and efficiently; the supersolvability of $p$-groups ensures that
    extremely efficient algorithms can be used, while no computation is
    even necessary for $P = 1$. \\
    
    \subsection{\label{sec:sylow}Sylow 2-Subgroups of $G$}
    
    The sole case in which the computations for Lemma~\ref{lem:ind}
    present serious difficulty is when $P$ is a Sylow 2-subgroup of
    $C_G(1) = G$. The $|P| = 2^{46}$ elements --- which also make $P$
    a Sylow 2-subgroup of the Monster, incidentally --- and especially
    the 26758 conjugacy classes of $P$ are so numerous as to make
    considerable demands on time and space when calculating.
    These are a mere inconvenience for the most part, but present
    a serious impediment to the enumeration of irreducible characters
    of $P$. A more considered approach must therefore be taken. \\
    
    Recall that the irreducible characters of a finite group $H$
    form a square array of algebraic integers, with dimension equal
    to the number of conjugacy classes of $H$. Conlon \cite{conlon}
    describes an efficient and now widely used algorithm to obtain
    this array for $H$ supersolvable, consisting of the construction
    of a list of pairs $(C, K)$ such that $K \lhd \, C \le H$, the $C/K$
    are central cyclic subgroups of the $C_H(K)/K$, and
    each irreducible character of $H$ is induced
    from an irreducible character of some $C$ with kernel $K$.
    Experimentation with the GAP \cite{gap} function \texttt{IrrConlon},
    which is a (slightly modified) implementation of Conlon's algorithm,
    shows that the vast majority of
    time and space are spent on inducing characters from small subgroups
    and storing what eventually amount to $26758^2 \approx 7.2 \times 10^8$
    character values, respectively. The following modifications, focused
    on these bottlenecks, are thus made to help alleviate the burden of
    computing the irreducible characters of $P \in {\rm Syl}_2 (G)$. \\
    
    First, we split \texttt{IrrConlon} into two functions: one which
    constructs the list of pairs $(C, K)$, and another which evaluates
    the characters induced from a single pair. Multiple GAP instances run
    in parallel, each clones of a single workspace in which
    the relevant pairs have been computed and stored, can then share
    the work of inducing characters. \\
    
    Second, we avoid direct computation of character values where
    possible via a variant of a result by Th\"ummel:
    
    \begin{prop}[Adapted from {Th\"ummel \cite{thummel}}, Lemma~2]
        \label{lem:thum}
        Let $C \le H$ be finite groups. Suppose that $C$ is
        generated by $K \lhd C$ and $g \in C$, where $C/K$ is
        cyclic. Furthermore, suppose $N \unlhd K$ is a normal subgroup
        of $H$ such that the coset $gN$ is central in $H/N$.
        Then, for all $x \in H$, any character $\chi$ induced
        from a character of $C$ with kernel $K$ satisfies 
        \begin{enumerate}
            \item $\chi(xn) = \chi(x)$ for every $n \in N$;
            \item $\chi(xg) = \omega \chi(x)$, where $\omega$
                is a $|C/K|^{\rm th}$ root of unity not
                dependent on $x$ (so that, in particular,
                $\omega = -1$ if $|C/K| = 2$); and consequently
            \item $\chi(x) = 0$ whenever $xg$ is conjugate to $xn$
                for some $n \in N$.
        \end{enumerate}
    \end{prop}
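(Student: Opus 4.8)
The plan is to exploit the fact that $\chi$ is induced from a character $\psi$ of $C$ whose kernel is exactly $K$, so that $\psi$ factors through the cyclic quotient $C/K = \langle gK \rangle$; in particular $\psi(g)$ is a primitive $|C/K|^{\textrm{th}}$ root of unity $\omega$, and $\psi$ is a linear character whose value on any $ck$ (with $c \in \langle g \rangle$, $k \in K$) depends only on the image in $C/K$. The induced character is given by the usual formula $\chi(x) = \frac{1}{|C|} \sum_{\substack{h \in H \\ x^h \in C}} \psi(x^h)$, and the strategy is to use the hypotheses on $N$ to control how $\psi$ behaves under multiplication by $n \in N$ and by $g$, then push this through the sum.

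First I would establish (1). Since $N \unlhd H$ is normal, conjugation sends $N$ into $N \le K$, so for any $h$ with $x^h \in C$ we have $(xn)^h = x^h n^h$ with $n^h \in N \subseteq K = \ker \psi$; because $\psi$ is a class function vanishing on $K$ in the sense that $\psi(ck) = \psi(c)$ for $k \in K$, it follows that $\psi((xn)^h) = \psi(x^h)$. Moreover the conjugation condition ``$x^h \in C$'' is equivalent to ``$(xn)^h \in C$'' since the two differ by the element $n^h \in N \le C$, so the two summations run over the same set $\{h : x^h \in C\}$. Summing term by term then gives $\chi(xn) = \chi(x)$.

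Next I would prove (2). The hypothesis that $gN$ is central in $H/N$ means $g^h \in gN$ for every $h \in H$; writing $g^h = g n'$ with $n' \in N \le K$, we get $(xg)^h = x^h g^h = x^h g n'$ with $n' \in \ker\psi$, so $\psi((xg)^h) = \psi(x^h g)$. Using that $\psi$ is linear and $g$ lies in $C$, this equals $\psi(x^h)\psi(g) = \omega\,\psi(x^h)$ whenever $x^h \in C$ (note $x^h g \in C$ as well since $g \in C$). One must check the summation sets again match: $x^h \in C \iff (xg)^h \in C$, which holds because $g n' \in C$. Factoring out the constant $\omega$ from every term yields $\chi(xg) = \omega\,\chi(x)$, and the specialisation $\omega = -1$ when $|C/K| = 2$ is immediate since the only primitive square root of unity is $-1$.

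Finally, part (3) should follow formally from (1) and (2): if $xg$ is conjugate to $xn$ for some $n \in N$, then $\chi(xg) = \chi(xn)$ since $\chi$ is a class function, and combining with the two identities gives $\omega\,\chi(x) = \chi(x)$, whence $(\omega - 1)\chi(x) = 0$; as $\omega \neq 1$ (the character has kernel exactly $K$, so $\psi$ is non-trivial on $C/K$ and $\omega$ is a \emph{primitive} root of unity, in particular $\omega \ne 1$ when $|C/K| > 1$), we conclude $\chi(x) = 0$. The main obstacle I anticipate is the bookkeeping around the summation index sets in (1) and (2): one must verify carefully that multiplying $x$ by $n$ or by $g$ does not change which conjugates $x^h$ land in $C$, which relies essentially on $N \le C$ and $g \in C$ together with the normality of $N$ and centrality of $gN$ modulo $N$. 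A secondary subtlety is confirming that $\psi$ genuinely behaves as a \emph{linear} character on the relevant cosets, i.e. that the kernel being exactly $K$ forces $\psi$ to factor through the cyclic group $C/K$ as a faithful linear character, so that $\psi(g)$ is a primitive root of unity rather than merely some root of unity.
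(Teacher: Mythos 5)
Your proof is correct. Note that the paper itself supplies no proof of this proposition: it is stated as an adaptation of Th\"ummel's Lemma~2 and left to that reference, so there is nothing in the source to compare against line by line. Your direct verification via the induction formula $\chi(x) = |C|^{-1}\sum_{h:\,x^h\in C}\psi(x^h)$ is exactly the natural argument, and you handle the two points that actually need care: (i) that the summation sets for $x$, $xn$, and $xg$ coincide (because $n^h\in N\le C$ and $g^h=gn'\in C$), and (ii) that $\psi$ is a \emph{linear} character faithful on the cyclic quotient $C/K=\langle gK\rangle$, so $\omega=\psi(g)$ is a primitive $|C/K|^{\rm th}$ root of unity and in particular $\omega\ne 1$, which is what makes part (3) go through. (In the context of Conlon's algorithm, which this proposition serves, the induced characters do come from irreducible, hence linear, characters of $C$ with kernel exactly $K$, so your reading of the hypothesis is the intended one.) The only caveat worth recording is that the statement implicitly assumes $g\notin K$: if $C=K$ then $\omega=1$ and conclusion (3) fails trivially (e.g.\ for the trivial character), a degeneracy you correctly isolate as the condition $|C/K|>1$. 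This does not affect the intended applications, where $C/K$ is a nontrivial cyclic quotient.
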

    
    Finally, rather than being retained as lists in RAM, characters
    are written straight to disk with a very basic run-length encoding.
    Doing so reduces the pressure of memory constraints, whilst also
    serving to unify output from multiple GAP processes. Note that
    the tendency of characters to take the same value across several classes,
    indicated by Lemma~\ref{lem:thum}, means run-length encoding yields
    quite large compression ratios (exceeding 16 for $P$). \\
    
    Using \texttt{IrrConlon} with the above modifications,
    25540 irreducible characters of $G$ were computed in the span of 3 weeks
    on consumer grade hardware (a Dell Latitude with 32GB RAM and a 4-core
    Intel i5 processor). The resulting list occupies 140MB of disk space.
    Calculations were halted once all 718 irreducible characters of $G$ could
    successfully be derived.
    
    \begin{remark}
        Note that the Sylow 2-subgroups of $G$ are also Sylow 2-subgroups
        of 2-local maximal subgroups of the Monster with shapes
        $\grpA$, $\grpB$, $\grpC$, and $\grpZ$, as well as of the Monster
        itself. The list of computed irreducible characters of this 2-group
        is available from the author upon request.
    \end{remark}
    
    \begin{small}
    
    \end{small}
\end{document}